\theoremstyle{plain}
             \newtheorem*{theorem}{Theorem}
            \newtheorem{lemma}{Lemma}
          \newtheorem{definition}{Definition}
            \newtheorem{example}{Example}
\begin{document}

\title{Topological stability from a measurable viewpoint}

\author{Keonhee Lee}
\address{Department of Mathematics, Chungnam Ntional University, Daejeon 305-764, Republic of Korea.}
\email{khlee@cnu.ac.kr}

\author{Seunghee Lee}
\address{Department of Healthcare Data Science Center, Konyang University Hospital, Daejeon, Republic of  Korea }
\email{shleedynamics@gmail.com}

\author{C.A. Morales}
\address{Instituto de Matem\'atica, Universidade Federal do Rio de Janeiro, P. O.
Box 68530, 21945-970 Rio de Janeiro, Brazil.}
\email{morales@impa.br}

\thanks{Work supported by Basic Science Research Program through the NRF funded by the Ministry of Education (Grant Number: 2022R1l1A3053628).}

\keywords{Borel probability measure, Expansive, Weak shadowing property}

\subjclass[2010]{Primary  37B25, Secondary 37C50}





\begin{abstract}
We introduce the {\em $\mu$-topological stability}. This is a type of stability depending on the measure $\mu$ different from the set-valued approach \cite{lm}.
We prove that the map $f$ is $m_p$-topologically stable
if and only if $p$ is a topologically stable point ($m_p$ is the Dirac measure supported on $p$).
On closed manifolds of dimension $\geq2$ we prove that
every $\mu$-topologically stable map has the $\mu$-shadowing property for finitely supported measures $\mu$.
Moreover the $\mu$-topological stability is invariant under topological conjugacy or restriction to compact invariant sets of full measure.
We also prove for expansive maps that the set of measures $\mu$ for which the map is $\mu$-topologically stable is convex.
We analyze the relationship between $\mu$-topological stability for absolutely continuous measures.
In the nonatomic case we show that the $\mu$-topological stability implies the set-valued stability approach in \cite{lm}.
Finally, we show that every expansive map with the weak $\mu$-shadowing property (c.f. \cite{lr}) is $\mu$-topologically stable.
\end{abstract}

\maketitle



\section{Introduction}

\noindent
The concept of {\em topological stability} by Walters \cite{b,w} is the topological version of Andromov-Pontriagin's structural stability \cite{ap}. Hyperbolic systems were proved to be topologically stable.
These include the Anosov, Morse-Smale or Axiom A diffeomorphisms with the strong transversality condition \cite{n1}, \cite{w0}.
The topological stability of Anosov diffeomorphisms was generalized through Walters's stability theorem \cite{w}.
This theorem asserts that every expansive homeomorphism with the shadowing property of a compact metric space is topologically stable.

In this paper we introduce the {\em $\mu$-topological stability}. This is a type of stability depending on the measure $\mu$ different from the set-valued approach \cite{lm}.
We prove that the map $f$ is $m_p$-topologically stable
if and only if $p$ is a topologically stable point.
On closed manifolds of dimension $\geq2$ we prove that
every $\mu$-topologically stable map has the $\mu$-shadowing property for finitely supported measures $\mu$.
Moreover the $\mu$-topological stability is invariant under topological conjugacy or restriction to compact invariant sets of full measure.
We also prove for expansive maps that the set of measures $\mu$ for which the map is $\mu$-topologically stable is convex.
We analyze the relationship between $\mu$-topological stability for absolutely continuous measures.
In the nonatomic case we show that the $\mu$-topological stability implies the set-valued stability approach in \cite{lm}.
Finally, we show that every expansive map with the weak $\mu$-shadowing property (c.f. \cite{lr}) is $\mu$-topologically stable.

\subsection{Background and motivations}
Let $X$ be a metric space. Denote by $C^0(X)$ the set of continuous maps from $X$ to itself.
Define the $C^0$-distance between maps of metric spaces $l,r:Y\to X$ by
$$
d_{C^0}(l,r)=\sup_{y\in Y}d(l(y),r(y)).
$$
If $Y\subset X$, then we denote the inclusion by $i_Y: Y\hookrightarrow X$.
In particular, $i_X$ is the identity of $X$.

We recall the basic definition of topological stability by Walters \cite{w}.

\begin{definition}
We say that $f:X\to X$
is {\em topologically stable} if for every $\epsilon>0$ there is $\delta>0$ such that for every continuous $g:X\to X$ with $d_{C^0}(f,g)\leq\delta$ there is $h:X\to X$ continuous such that
$$
d_{C^0}(h,i_X)\leq\epsilon\quad\mbox{ and }\quad f\circ h=h\circ g.
$$
\end{definition}

{\em Walters's stability theorem} \cite{w} asserts that the following two properties suffice for a continuous map of a compact metric space $f:X\to X$ to be topologically stable:

\begin{itemize}
\item
$f$ is {\em expansive} if there is $e>0$ (called {\em expansivity constant}) such that if $x,y\in X$ and $d(f^n(x),f^n(y))\leq e$ for every nonnegative integer $n$, then $x=y$.

\item
$f$ has the {\em shadowing property} if for every $\epsilon>0$ there is $\delta>0$ such that for every sequence $(x_n)_{n\geq0}$ with $d(f(x_n),x_{n+1})\leq\delta$ for all $n\geq0$ there is $x\in X$ such that
$d(f^n(x),x_n)\leq \epsilon$ for every $n\geq 0$.
\end{itemize}

A measurable version of this theorem was given in \cite{lm} or \cite{lr}.
Let us remind it.

A {\em Borel probability measure} is a $\sigma$-additive measure $\mu$ defined in the Borel $\sigma$-algebra $\beta_X$ of $X$ such that $\mu(X)=1$. Denote by
$\mathcal{M}(X)$ the set of Borel probability measures of $X$.

Denote $2^X$ the set of subsets of a compact metric space $X$.
Maps $H:X\to 2^X$ are called {\em set-valued maps}.
The domain of $H$ is defined by $Dom(H)=\{x\in X:H(x)\neq\emptyset\}$.
We say that $H$ is {\em compact-valued} if $H(x)$ is compact for every $x\in X$.
We write $d_{C^0}(H,i_X)\leq \epsilon$ for some $\epsilon>$ if $H(x)\subset B[x,\epsilon]$ where $B[x,\epsilon]$ denotes the closed $\epsilon$-ball around $x$.
We say that $H$ is {\em upper semicontinuous} if for every $x\in Dom(H)$ and every neighborhood $O$ of $x$ there exists $\eta>0$ such that
$H(y)\subset O$ for every $y\in X$ with $d(x,y)\leq \eta$.

\begin{definition}
Given $\mu\in\mathcal{M}(X)$ we call $f\in C^0(X)$
{\em $\mu$-topologically set-valued stable} if for every $\epsilon>0$ there is $\delta>0$ such that for every $g\in C^0(X)$ with $d_{C^0}(f,g)\leq\delta$ there is
an upper semicontinuous compact-valued map $H$ of $X$ such that
$$
\mu(Dom(H))\geq 1-\epsilon,\quad \mu\circ H=0,\quad d_{C^0}(H,id_X)\leq \epsilon, \quad f\circ H=H\circ g.
$$
\end{definition}

Sufficient conditions for $\mu$-topologically set-valued stable can be obtained from the next definitions where $f\in C^0(X)$ and $\mu\in\mathcal{M}(X)$:

\begin{itemize}
\item
We say that $f$ is {\em $\mu$-expansive} if there is $\epsilon>0$
such that
$$
\mu\{y\in X:d(f^n(x),f^n(y))\leq\epsilon,\,\forall n\geq0\})=0,\quad\quad\forall x\in X.
$$
\item
We say that $f$ has the {\em $\mu$-shadowing property}
(for a given $\mu\in\mathcal{M}(X)$) if for every $\epsilon>0$ there are $\delta>0$ and $B\in\beta_X$ with $\mu(B)=1$ such that for every sequence $(x_n)_{n\geq0}$ with $x_0\in B$ and $d(f(x_n),x_{n+1})\leq\delta$ for all $n\geq0$ there is $x\in X$ such that
$d(f^n(x),x_n)\leq \epsilon$ for every $n\geq 0$.
\item
We say that $f$ has the {\em weak $\mu$-shadowing property}
(for a given $\mu\in\mathcal{M}(X)$) if for every $\epsilon>0$ there are $\delta>0$ and $B\in\beta_X$ with $\mu(X\setminus B)\leq \epsilon$ such that for every sequence $(x_n)_{n\geq0}$ with $x_0\in B$ and $d(f(x_n),x_{n+1})\leq\delta$ for all $n\geq0$ there is $x\in X$ such that
$d(f^n(x),x_n)\leq \epsilon$ for every $n\geq 0$.
\end{itemize}

Define
\begin{itemize}
\item
$
\mathcal{M}_{top*}(f)=\{\mu\in\mathcal{M}(X):f\mbox{ is $\mu$-topological set-valued stable}\},
$
\item
$
\mathcal{M}_{exp}(f)=\{\mu\in\mathcal{M}(X):f\mbox{ is $\mu$-expansive}\},
$
\item
$
\mathcal{M}_{sh}(f)=\{\mu\in\mathcal{M}(X):f\mbox{ has the $\mu$-shadowing property}\}
$
and
\item
$
{\small \mathcal{M}_{wsh}(f)=\{\mu\in\mathcal{M}(X):f\mbox{ has the weak $\mu$-shadowing property}\}}.
$
\end{itemize}

The sole difference between the $\mu$-shadowing and weak $\mu$-shadowing properties is given by the set $B$ which is full measure in the former definition and has measure at least $1-\epsilon$ in the latter.
Therefore, 
$$
\mathcal{M}_{sh}(f)\subset \mathcal{M}_{wsh}(f).
$$
But the converse inclusion is false (theorems 1 and 2 in \cite{lr}).
On the other hand, Theorem 6 in \cite{lr} (based on Theorem 3.1 in \cite{lm})
implies the following measurable version of Walters's stability theorem:
\begin{equation}
\label{litt}
\mathcal{M}_{exp}(f)\cap \mathcal{M}_{wsh}(f)\subset \mathcal{M}_{top*}(f).
\end{equation}

Now we introduce an alternative measure-theoretical version based on Walters's  concept of factor (Definition 2.6 in p. 61 of  \cite{w2}).

We say that $Y\subset X$ is {\em $g$-invariant} (for some $g\in C^0(X)$) if $g(Y)\subset Y$.
Given compact metric spaces $X, Z$, $\mu\in \mathcal{M}(X)$ and $\nu\in\mathcal{M}(Z)$, a measurable $g:X\to Z$ is {\em measure-preserving} if $g_*(\mu)=\nu$ where
 $g_*(\mu)=\mu\circ g^{-1}$ is the pull-back operator.

\begin{definition}
Given $\mu\in \mathcal{M}(X)$ we say that a measure preserving $f\in C^0(X)$ is a {\em factor} of a measure preserving
$g\in C^0(X)$ if there are $f$ (resp. $g$)-invariant sets $Y_f,Y_g\in \beta_X$ with
$$
\mu(Y_f)=\mu(Y_g)=1,
$$
and a measure preserving $h:Y_g\to Y_f$ such that
$$f\circ h=h\circ g.$$
\end{definition}

In this definition, $Y_f$ is assumed to be equipped with the
$\sigma$-algebra $\{Y_f\cap B:B\in\beta_X\}$, and the restriction of the measure $\mu$ to this $\sigma$-algebra (similarly for $Y_g$).


\subsection{Definition}
Our definition will be obtained
by weakening Walters's factor.
First, remove the measure preserving hypothesis since it is unnecessary.
Also, take $Y_f$ to be the whole $X$.
On the other hand, we will maintain the $g$-invariant set $Y=Y_g$ as it is fundamental for the identity $f\circ h=h\circ g$.
However, for a given $\epsilon>0$ and this measurable $h:X\to X$, Lusin Theorem permits to choose
a compact $Y\subset Y_g$
such that $\mu(Y_g\setminus Y)\leq \epsilon$
and $h:Y\to X$ is continuous. 
In particular,
$$
\mu(X\setminus Y)\leq \mu(X\setminus Y_g)+\mu(Y_g\setminus Y)
\leq \epsilon.
$$
As in topological stability, we use that $\epsilon$ to measure the $C^0$-distance between $h$ and the inclusion $i_Y$. 

Combining these properties with the definition of topological stability we arrive the following definition.

\begin{definition}
Given $\mu\in\mathcal{M}(X)$ we call $f\in C^0(X)$
{\em $\mu$-topologically stable} if for every $\epsilon>0$ there is $\delta>0$ such that for every $g\in C^0(X)$ with $d_{C^0}(f,g)\leq\delta$
 there are a compact $g$-invariant set $Y\subset X$ with
$\mu(X\setminus Y)\leq \epsilon$ and
$h:Y\to X$ continuous such that
$$
d_{C^0}(h,i_Y)\leq \epsilon\quad\mbox{ and }\quad
f\circ h=h\circ g.
$$
We denote
$$
\mathcal{M}_{top}(f)=\{\mu\in\mathcal{M}(X):f\mbox{ is $\mu$-topologically stable}\}.
$$
\end{definition}

A related example is given below.

\begin{example}
If $f\in C^0(X)$ is topologically stable, then
\begin{equation}
\label{octopussy}
\mathcal{M}_{top}(f)=\mathcal{M}(X).
\end{equation}
We don't know if, conversely, every $f\in C^0(X)$ satisfying \eqref{octopussy} is topologically stable.
\end{example}

\subsection{The result}
We state our main result.
Previously we introduce some basic definitions.
The first one is given in \cite{klm}.

Hereafter $X$ will denote a compact metric space.
The {\em orbit} of $p$ under $g\in C^0(X)$ is defined by
$$O_g(p)=\{g^n(p):n\geq0\}.$$
The closure operation is denoted by $\overline{B}$ for $B\subset X$.

\begin{definition}
We say that $p\in X$ is {\em topologically stable} for $f\in C^0(X)$ if
for every $\epsilon>0$ there is $\delta>0$ such that for every $g\in C^0(X)$ with $d_{C^0}(f,g)\leq\delta$ there is a continuous $h:\overline{O_g(p)}\to X$ such that
$$
d_{C^0}(h,i_{\overline{O_g(p)}})\leq\epsilon\quad\mbox{ and }\quad
f\circ h=h\circ g.
$$
Denote by $T(f)$ the set of topologically stable points of $f$.
\end{definition}

A Borel probability measure is {\em atomic} if there are points with positive measure
and {\em nonatomic} otherwise.
A classical example of atomic measure is the {\em Dirac measure} $m_p$ for $p\in X$
defined by $m_p(A)=0$ (if $p\notin A$) or $1$ (otherwise).
We say that $\mu\in\mathcal{M}(X)$ is {\em finitely supported} if it is a finite convex combination of Dirac measures i.e.
$$
\mu=\sum_{i=1}^kt_i m_{p_k},
$$
for some $t_1,\cdots, t_k\in (0,1)$ and
$\sum_{i=1}^kt_i=1$ and some $p_1,\cdots, p_k\in X$.
 We say that $\mu$
is {\em absolutely continuous} with respect to $\nu\in\mathcal{M}(X)$ (denoted $\mu\prec\nu$) if $B\in \beta_X$ and $\nu(B)=0$ imply $\mu(B)=0$.

A subset $\mathcal{M}\subset \mathcal{M}(X)$ is 
\begin{itemize}
\item
{\em $\prec$-invariant} if
$\nu\in\mathcal{M}$ and $\mu\prec\nu$ for $\mu\in\mathcal{M}$ $\Longrightarrow$ $\mu\in\mathcal{M}$;
\item
{\em convex} if $\mu,\nu\in\mathcal{M}$ and $0\leq t\leq 1$
$\Longrightarrow$ $t\mu+(1-t)\nu\in \mathcal{M}$. 
\end{itemize}

With these definitions we can state our main result.

\begin{theorem}
The following properties hold for all compact metric spaces $X$ and $f\in C^0(X)$:

\begin{enumerate}
\item
$T(f)=\{p\in X:m_p\in\mathcal{M}_{top}(f)\}$.

\medskip

\item
$\mathcal{M}_{top}(f)$ is $\prec$-invariant.

\medskip

\item
If $f$ is a homeomorphism and $X$ a closed manifold of dimension bigger than or equal $2$,
then $\mathcal{M}_{fs}(X)\cap \mathcal{M}_{top}(f)\subset \mathcal{M}_{sh}(f)$.

\medskip

\item
If $H:X\to Z$ is a homeomorphism of compact metric spaces, then
$\mathcal{M}_{top}(H\circ f\circ H^{-1})=H_*(\mathcal{M}_{top}(f))$.

\medskip

\item
If $f$ is expansive, then $\mathcal{M}_{top}(f)$ is convex.

\medskip

\item
$\mathcal{M}_{na}(X)\cap \mathcal{M}_{top}(X)\subset\mathcal{M}_{top*}(f)$.

\medskip

\item
If $f$ is expansive,
then
$
\mathcal{M}_{wsh}(f)\subset \mathcal{M}_{top}(f).
$ 
\end{enumerate}
\end{theorem}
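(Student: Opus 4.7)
The plan is to follow Walters's classical construction of the conjugating map by expansive selection of shadowing points, but to arrange that the domain of the resulting map is a compact $g$-invariant set of almost full measure, as required by the new definition of $\mu$-topological stability. Fix $\mu\in\mathcal{M}_{wsh}(f)$ and $\epsilon>0$, and let $e>0$ be an expansivity constant of $f$.

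First I set $\epsilon'=\min(\epsilon/2,e/3)$, apply the weak $\mu$-shadowing property at scale $\epsilon'$ to obtain $\delta>0$ and a Borel set $B_0$ with $\mu(X\setminus B_0)\leq\epsilon'$, and use inner regularity of $\mu$ to shrink $B_0$ to a compact set $B$ with $\mu(X\setminus B)\leq\epsilon$. Take any $g\in C^0(X)$ with $d_{C^0}(f,g)\leq\delta$. For every $x\in B$ the $g$-orbit $(g^n(x))_{n\geq 0}$ is a $\delta$-pseudo-orbit of $f$ starting in $B_0$, so there is some $y\in X$ with $d(f^n(y),g^n(x))\leq\epsilon'$ for all $n\geq 0$; since $2\epsilon'<e$, expansivity forces this $y$ to be unique, and I set $h(x)=y$.

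Next I propagate $h$ forward to $B^{*}:=\bigcup_{n\geq 0}g^n(B)$ by the formula $h(g^n(x))=f^n(h(x))$. Well-definedness across different representations follows again from expansivity, since any two candidates both $\epsilon'$-shadow the $g$-orbit of their common base point. The estimate $d(f^i(h(y)),g^i(y))\leq\epsilon'$ persists throughout $B^{*}$, and $f\circ h=h\circ g$ holds there by construction. Put $Y:=\overline{B^{*}}$, which is compact and $g$-invariant (closures of forward-invariant sets are forward-invariant under continuous $g$), and $Y\supset B$ gives $\mu(X\setminus Y)\leq\epsilon$. To extend $h$ continuously from $B^{*}$ to $Y$, for $y\in Y$ take $y_k\in B^{*}$ with $y_k\to y$; by compactness a subsequence $h(y_{k_j})$ converges to some $w$, and passing to the limit in $d(f^i(h(y_{k_j})),g^i(y_{k_j}))\leq\epsilon'$ gives $d(f^i(w),g^i(y))\leq\epsilon'$ for every $i\geq 0$, so expansivity forces $w$ to be the unique such point. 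Independence of the subsequence yields convergence (hence a continuous $h:Y\to X$) and, applied at $i=0$, the estimate $d_{C^0}(h,i_Y)\leq\epsilon'\leq\epsilon$. Continuity of $f,g,h$ finally lets the identity $f\circ h=h\circ g$ pass from $B^{*}$ to $Y$.

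The main obstacle is the $g$-invariance clause in the new definition: the weak $\mu$-shadowing property only produces a set $B$ of large measure from which pseudo-orbits can be started, with no control on how $g$ acts on $B$, and intersecting $B$ with its $g$-preimages would generally leak measure since $\mu$ need not be $g$-invariant. The trick that avoids this is to take $Y$ to be the closure of the \emph{forward} $g$-orbit of $B$ rather than a subset of $B$; this makes $Y$ compact and $g$-invariant, keeps $Y\supset B$ so the measure estimate is automatic, and reduces the rest to the continuous extension of $h$, which comes essentially for free once expansivity has been used to pin down the shadowing point uniquely.
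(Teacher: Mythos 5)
There is a genuine gap, but it is one of scope rather than of substance in what you wrote: the statement to be proved is a seven-part theorem, and your proposal addresses only Item (7), the inclusion $\mathcal{M}_{wsh}(f)\subset\mathcal{M}_{top}(f)$ for expansive $f$. Nothing in your argument touches Item (1) (the identification $T(f)=\{p:m_p\in\mathcal{M}_{top}(f)\}$, which needs the observation that $m_p(Y)>0$ forces $p\in Y$ and hence $\overline{O_g(p)}\subset Y$), Item (2) ($\prec$-invariance, via choosing $\epsilon'$ so that $\nu(B)\leq\epsilon'$ implies $\mu(B)\leq\epsilon$), Item (3) (which the paper reduces to results on shadowable and topologically stable points on closed manifolds of dimension $\geq 2$), Item (4) (conjugacy invariance, via uniform continuity of $H$ and $H^{-1}$), Item (5) (convexity for expansive $f$, where one glues the two semiconjugacies $h_\mu,h_\nu$ on $Y_\mu\cup Y_\nu$ and uses expansivity to show they agree on the overlap), or Item (6) (building the upper semicontinuous set-valued map $H(x)=\{h(x)\}$ on $Y$ and using nonatomicity to get $\mu\circ H=0$). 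Each of these requires its own argument, and several are not routine; as it stands the proposal proves one seventh of the theorem.

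For the part you did prove, the construction is correct and is essentially the paper's argument for Item (7): obtain $\delta$ and $B$ from the weak $\mu$-shadowing property at scale $\epsilon'$ comparable to $\min\{e,\epsilon\}$, define $h$ on the forward $g$-orbit of $B$ by selecting the (expansively unique) shadowing point, transport it by $h\circ g=f\circ h$, and pass to the closure $Y=\overline{O_g(B)}$, which is compact, $g$-invariant, and contains $B$, so the measure bound is automatic. Your continuity argument differs slightly from the paper's: you characterize $h(y)$ intrinsically as the unique point $w$ with $d(f^i(w),g^i(y))\leq\epsilon'$ for all $i\geq 0$ and deduce continuity on all of $Y$ by a subsequence-plus-uniqueness argument, whereas the paper proves uniform continuity of $h$ on $O_g(B)$ using Walters's lemma on expansive maps and then extends. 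Both are valid; yours is arguably more self-contained. The inner-regularity step shrinking $B_0$ to a compact $B$ is harmless but unnecessary, since you take the closure of the orbit anyway.
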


Related to Item (6) above we have the following example.

\begin{example}
\label{basicas}
There are a compact metric space $X$ and $f\in C^0(X)$
such that $\mathcal{M}_{top}(f)\not\subset \mathcal{M}_{top*}(f)$.
\end{example}

\begin{proof}
Take any compact metric space $X$ with an isolated point $p$
and any $f\in C^0(X)$ such that $p$ is a fixed point of $f$.

Since $p$ is a fixed point of $f$ and also an isolated point of $X$, $p$ is a fixed point of every $g\in C^0(X)$ which is $d_{C^0}$-close to $f$.
Then, $p\in T(f)$ and so Item (1) of the theorem implies
$$
m_p\in\mathcal{M}_{top}(f).
$$
On the other hand, Theorem 7 in \cite{lr} (or the proof of Theorem 3.8 in \cite{lm}) implies
$$
m_p\notin \mathcal{M}_{top*}(f).
$$
\end{proof}

Let us apply this theorem to the case of homeomorphisms $f:S^1\to S^1$
where $S^1$ is the unit complex circle (circle homeomorphisms for short).
Denote by $P:\mathbb{R}\to S^1$ the projection
$P(t)=e^{2\pi t}$.
A lift of $f$ is a homeomorphism $F:\mathbb{R}\to \mathbb{R}$ such that
$f\circ P=P\circ F$.
It follows that
$$
\tau(f)=\lim_{n\to\infty}\frac{1}n(F^n(t)-t)
$$
is well-defined (mod $2\pi$).
This number is called {\em rotation number} of $f$.

\begin{example}
\label{parceria}
If $f$ is a circle homeomorphism with irrational rotation number, then
$\mathcal{M}_{top}(f)=\emptyset$. In particular, $\mathcal{M}_{top}(f)=\emptyset$ for all transitive circle homeomorphism $f$.
\end{example}

\begin{proof}
The second part of the example is a consequence of the first and Theorem 11.2.7 p. 397 in \cite{kh} (Poincar\'e Classification Theorem).
For the first part,
denote by $R_\alpha$ the circle rotation of angle $\alpha$.
It follows from the Poincar\'e Theorem that
that there is $k:S^1\to S^1$ continuous such that
$R_\alpha\circ k=k\circ f$ where $\alpha=\tau(f)$.

Suppose by contradiction that there is $\mu\in\mathcal{M}_{top}(f)$.
Choose $\delta>0$ from the $\mu$-topological stability of $f$ for
$\epsilon=\frac{1}2$.
Since every circle homeomorphism can be $C^0$-approximated by diffeomorphisms (c.f. Theorem 2.6 p. 49 in \cite{h}), by a theorem of Peixoto (Remark 3 p. 120 in \cite{p})
there is a Morse-Smale diffeomorphism $g:S^1\to S^1$
with $d_{C^0}(f,g)\leq\delta$.

For this $g$ there is a closed $g$-invariant set $Y\subset S^1$ with $\mu(S^1\setminus Y)\leq\frac{1}2$ and a continuous $h:Y\to S^1$ such that
$f\circ h=h\circ g$.
Since $g$ is Morse-Smale and $Y$ closed $g$-invariant (and nonempty because
$\mu(S^1\setminus Y)\leq\frac{1}2$), there is a periodic point $p\in Y$. Let $n$ be the period of $p$ i.e. $g^n(p)=p$.
By taking $z=k\circ h(p)$ we get
$$
R^n_\alpha(z)=R^n\circ k\circ h(p)=k\circ f^n\circ k(p)=k\circ h\circ g^n(p)=k\circ h(p)=z
$$
This contradicts that $\alpha$ is irrational completing the proof.
\end{proof}

Another example is as follows.

\begin{example}
$\mathcal{M}_{top}(R_\alpha)=\emptyset$ for every circle rotation $R_\alpha$.
\end{example}

\begin{proof}
By Example \ref{parceria} we can assume that $\alpha$ is rational.
Let $\delta$ be given by the $\mu$-topological stability of $R_\alpha$ for $\epsilon=\frac{diam(S^1)}4$.
Since $\alpha$ is rational, $R^k_\alpha=i_{S^1}$ for some $k\in \mathbb{N}$.
Choose an irrational $\beta$ such that
$d_{C^0}(R_\alpha,g)\leq\delta$ where $g=R_\beta$.
For this $g$ we choose $Y\subset S^1$ compact $g$-invariant with $\mu(S^1\setminus Y)\leq\epsilon$ and $h:Y\to S^1$ continuous such that
$d_{C^0}(h,i_Y)\leq\epsilon$ and $R_\alpha\circ h=h\circ g$.
We have that $g$ is minimal (for it is an irrational rotation).
Since $Y$ is closed invariant and obviously nonempty, we get $Y=S^1$.
For any $n\in\mathbb{N}$ and $x\in S^1=Y$ one has
\begin{equation}
\label{pasado}
h(x)=R^{nk}_\alpha(h(x))=h(g^{nk}(x))=h(R^{nk}_\beta(x))=h(R^n_{k\beta}(x)).
\end{equation}
Since $\beta$ is irrational, $k\beta$ also is hence $R_{k\beta}$ is minimal too.
Then, $O_{R_{k\beta}}(x)$ is dense in $S^1$. This together with \eqref{pasado}
implies $h$ is constant. Let then $x_\epsilon$ be such that $h(x)=x_\epsilon$ for every $x\in Y$.
It follows that
$$
d(x_\epsilon,x)=d(h(x),x)\leq d_{C^0}(h,i_Y)\leq\epsilon,\quad\quad\forall x\in Y=S^1.
$$
Then,
$diam(S^1)\leq 2\epsilon<diam(S^1)$ which is absurd.
This ends the proof.
\end{proof}

We express our gratitude to our colleagues at the SIMS Dynamical System Seminar in Sejong, South Korea, for encouraging and enlighten discussions related to this work.

\section{Proof of the theorem}

\noindent
We break it in a sequence of lemmas. The first one is Item (1) of the theorem.

\begin{lemma}
\label{depp}
For every compact metric space $X$ one has
$$
T(f)=\{p\in X:f\mbox{ is $m_p$-topologically stable}\},
\quad\quad\forall f\in C^0(X).
$$
\end{lemma}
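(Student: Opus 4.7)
The plan is to prove both inclusions directly from the definitions, using the key observation that Dirac measures make the measure-theoretic condition $m_p(X\setminus Y)\leq \epsilon$ degenerate into a purely set-theoretic one whenever $\epsilon<1$.

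More precisely, since $m_p$ takes only the values $0$ and $1$, the inequality $m_p(X\setminus Y)\leq \epsilon$ with $\epsilon<1$ is equivalent to $p\in Y$, and in that case $m_p(X\setminus Y)=0$. Combined with the fact that if $Y$ is any compact $g$-invariant set containing $p$, then $\overline{O_g(p)}\subset Y$ (because $\overline{O_g(p)}$ is the smallest such set), this lets us pass back and forth between $Y$ and $\overline{O_g(p)}$.

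For the inclusion $T(f)\subset\{p:f \text{ is } m_p\text{-topologically stable}\}$, I would fix $p\in T(f)$ and $\epsilon>0$, take the $\delta>0$ given by the topological stability of $p$, and for any $g\in C^0(X)$ with $d_{C^0}(f,g)\leq\delta$ obtain the continuous conjugacy $h:\overline{O_g(p)}\to X$. Then set $Y=\overline{O_g(p)}$: it is compact and $g$-invariant (by continuity of $g$, $g(\overline{O_g(p)})\subset \overline{g(O_g(p))}\subset \overline{O_g(p)}$), contains $p$, hence $m_p(X\setminus Y)=0\leq\epsilon$, and the required $h$ with $d_{C^0}(h,i_Y)\leq\epsilon$ and $f\circ h=h\circ g$ is already provided.

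For the reverse inclusion, I would fix a $p$ such that $f$ is $m_p$-topologically stable and $\epsilon>0$; apply the $m_p$-topological stability with parameter $\epsilon':=\min\{\epsilon,\tfrac12\}$ to produce a $\delta>0$. For any $g$ with $d_{C^0}(f,g)\leq\delta$, the resulting compact $g$-invariant set $Y$ satisfies $m_p(X\setminus Y)\leq \tfrac12<1$, forcing $p\in Y$, and therefore $\overline{O_g(p)}\subset Y$. Restricting the supplied continuous $h:Y\to X$ to $\overline{O_g(p)}$ yields a continuous map $\tilde h$ with $d_{C^0}(\tilde h, i_{\overline{O_g(p)}})\leq d_{C^0}(h,i_Y)\leq \epsilon$ and $f\circ\tilde h=\tilde h\circ g$, proving $p\in T(f)$.

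There is essentially no obstacle here; the only subtle point is being careful to use $\epsilon'<1$ on the reverse side so that $m_p(X\setminus Y)\leq\epsilon'$ genuinely forces $p\in Y$, and to check that $\overline{O_g(p)}$ is itself a legitimate choice of compact $g$-invariant set on the forward side.
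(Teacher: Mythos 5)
Your proposal is correct and follows essentially the same argument as the paper: take $Y=\overline{O_g(p)}$ for the forward inclusion, and for the reverse use that a Dirac measure bound below $1$ forces $p\in Y$, hence $\overline{O_g(p)}\subset Y$, and restrict $h$. The only cosmetic difference is that you shrink $\epsilon$ to $\min\{\epsilon,\tfrac12\}$ where the paper simply assumes $0<\epsilon<1$ without loss of generality.
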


\begin{proof}
Take $p\in T(f)$, $\epsilon>0$ and $\delta$ be given by the topological stability of $p$.
Fix $g\in C^0(X)$ with $d_{C^0}(f,g)\leq\delta$ and let $h:\overline{O_g(p)}\to X$
be given by the definition of of topologically stable point.
Letting $Y=\overline{O_g(p)}$ we get a closed $g$-invariant set
with $p\in Y$ hence  $m_p(X\setminus Y)=0\leq\epsilon$.
Moreover, $d_{C^0}(h,i_Y)\leq \epsilon$ and $f\circ h=h\circ g$,
therefore $f$ is $m_p$-topologically stable.

Conversely, suppose that
$f$ is $m_p$-topologically stable and choose $\epsilon>0$.
Let $\delta>0$ be given by the definition and $g\in C^0(X)$ with $d_{C^0}(f,g)\leq\delta$.
We can assume $0<\epsilon<1$ without loss pf generality.
Then, there are a closed $g$-invariant set $Y$ with $m_p(X\setminus Y)\leq\epsilon$,
$d_{C^0}(h,i_Y)\leq\epsilon$ and $f\circ h=h\circ g$.
In particular, $m_p(Y)\geq 1-\epsilon>0$ so $p\in Y$.
Since $Y$ is closed $g$-invariant,
$\overline{O_g(p)}\subset Y$ hence $h$ is defined on $\overline{O_g(p)}$.
Denote by $h_*=h|_{\overline{O_g(p)}}$ the restriction of $h$ to $\overline{O_g(p)}$.
Then, $d_{C^0}(h_*,i_{\overline{O_g(p)}})\leq d_{C^0}(h,i_Y)\leq\epsilon$
and also $f\circ h_*=h_*\circ g$ proving $p\in T(f)$.
\end{proof}

Now we prove item (2).

\begin{lemma}
\label{convidado}
Let $X$ be a compact metric space and $\mu,\nu\in \mathcal{M}(X)$.
If $f\in C^0(X)$ is $\nu$-topologically stable, then $f$ is $\mu$-topologically stable, $\forall \mu\in\mathcal{M}(X)$ with $\mu\prec\nu$.
\end{lemma}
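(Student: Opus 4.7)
The plan is to reduce $\mu$-topological stability to $\nu$-topological stability by exploiting a well-known quantitative consequence of absolute continuity: if $\mu\prec\nu$, then for every $\epsilon>0$ there exists $\eta=\eta(\epsilon)>0$ such that $\nu(B)\leq\eta$ implies $\mu(B)\leq\epsilon$ for every $B\in\beta_X$. I would first state (and briefly justify) this $\epsilon$-$\eta$ formulation, since it is the only analytic content of the proof. The quickest justification is by contradiction: if it failed, one would obtain sets $B_n$ with $\nu(B_n)<2^{-n}$ but $\mu(B_n)\geq\epsilon_0$; then $B=\limsup B_n$ satisfies $\nu(B)=0$ by Borel--Cantelli, hence $\mu(B)=0$, contradicting $\mu(B)\geq\epsilon_0$.

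With that tool in hand, given $\epsilon>0$ I would pick $\eta>0$ as above and then invoke the $\nu$-topological stability of $f$ with tolerance $\epsilon'=\min\{\eta,\epsilon\}$ to obtain $\delta>0$. For any $g\in C^0(X)$ with $d_{C^0}(f,g)\leq\delta$, the definition provides a compact $g$-invariant $Y\subset X$ with $\nu(X\setminus Y)\leq\epsilon'\leq\eta$ together with a continuous $h\colon Y\to X$ satisfying $d_{C^0}(h,i_Y)\leq\epsilon'\leq\epsilon$ and $f\circ h=h\circ g$. The choice of $\eta$ then upgrades the measure estimate to $\mu(X\setminus Y)\leq\epsilon$, while the distance estimate and the conjugacy identity carry over without change, yielding the $\mu$-topological stability of $f$.

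The argument uses no dynamical input beyond the definition itself; every condition except the measure estimate is invariant under the change from $\nu$ to $\mu$, and that single estimate is handled by the absolute continuity lemma. I do not expect a genuine obstacle here, but the one point that requires care is making sure the constants are chosen in the correct order, namely that $\eta$ is fixed first from $\epsilon$ via absolute continuity, and then $\delta$ is obtained from $\nu$-stability applied to $\epsilon'=\min\{\eta,\epsilon\}$, not to $\epsilon$ itself; otherwise the two $\epsilon$'s collide and the measure upgrade fails.
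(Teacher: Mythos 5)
Your proof is correct and follows essentially the same route as the paper: both reduce the statement to the quantitative $\epsilon$--$\eta$ form of absolute continuity and then apply the $\nu$-stability definition at the smaller tolerance. The only cosmetic difference is that you justify that quantitative form via a Borel--Cantelli argument while the paper invokes the Radon--Nikodym theorem; both are standard and valid.
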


\begin{proof}
Fix $\epsilon>0$ and $\mu\prec\nu$.
By the Radon-Nikodym Theorem we can choose $0<\epsilon'< \epsilon$ such that
$$
B\in \beta_X\quad\mbox{ and }\quad \nu(B)\leq\epsilon'\quad\Longrightarrow\quad \mu(B)\leq\epsilon.
$$
Let $\delta$ be given by the $\nu$-topological stability of $f$ for this $\epsilon'$.

Suppose that $g\in C^0(X)$ and $d_{C^0}(f,g)\leq\delta.$
Then, there is a compact $g$-invariant set $Y\subset X$
with $\nu(X\setminus Y)\leq\epsilon'$ and
a continuous $h:Y\to X$ such that
$d_{C^0}(h,i_Y)\leq\epsilon'$ and $f\circ h=h\circ g$.
It follows from the choice of $\epsilon'$ that
$d_{C^0}(h,i_Y)\leq\epsilon$ and also $\mu(X\setminus Y)\leq\epsilon$. This completes the proof.
\end{proof}

Given $\mu\in \mathcal{M}(X)$ we denote by $A(\mu)=\{p\in X:\mu(\{p\})>0\}$ the set of atoms of $\mu$. The following short lemma looks to be known we include the proof for completeness.

\begin{lemma}
\label{ross}
$A(\mu)=\{p\in X:m_p\prec \mu\}$.
\end{lemma}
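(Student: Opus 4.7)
The plan is to prove the two set-theoretic inclusions $A(\mu)\subset\{p:m_p\prec\mu\}$ and $\{p:m_p\prec\mu\}\subset A(\mu)$ separately, using only the definitions of atom, Dirac measure, and absolute continuity. Since $m_p$ is concentrated at a single point, the argument should reduce in both directions to testing absolute continuity on the set $\{p\}$ itself, plus the observation that any Borel set $B$ either contains $p$ (in which case $m_p(B)=1$) or does not (in which case $m_p(B)=0$).

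For the forward inclusion, I would take $p\in A(\mu)$, so $\mu(\{p\})>0$, and fix an arbitrary Borel $B$ with $\mu(B)=0$. Monotonicity of $\mu$ gives $\mu(\{p\}\cap B)\leq \mu(B)=0$, so $p\notin B$ since otherwise $\{p\}\subset B$ would force $\mu(\{p\})=0$, contradicting $p\in A(\mu)$. Hence $m_p(B)=0$, proving $m_p\prec\mu$.

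For the reverse inclusion, I would argue contrapositively: if $p\notin A(\mu)$, then $\mu(\{p\})=0$, but $m_p(\{p\})=1\neq 0$, so $m_p\not\prec\mu$. Equivalently, assuming $m_p\prec\mu$, taking $B=\{p\}$ (which is Borel since $X$ is a compact metric space, hence $T_1$) immediately yields $\mu(\{p\})>0$ from $m_p(\{p\})=1$.

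There is no real obstacle here; the only thing to be careful about is that $\{p\}$ is Borel, which is automatic in the metric (Hausdorff) setting the paper works in. The whole proof is essentially one application of monotonicity in one direction and one evaluation of $m_p$ at a singleton in the other.
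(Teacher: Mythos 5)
Your proof is correct and follows essentially the same route as the paper: both directions reduce to testing absolute continuity against the singleton $\{p\}$, with the forward inclusion argued via the observation that a $\mu$-null set cannot contain an atom and the reverse inclusion by evaluating $m_p$ at $\{p\}$. The only cosmetic difference is that the paper phrases the forward inclusion contrapositively while you argue it directly.
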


\begin{proof}
Suppose $p\not\prec\mu$.
Then, $\mu(B)=0$ for some $B\in\beta_X$ with $m_p(B)>0$.
So, $p\in B$ thus $\mu(\{p\})\leq \mu(B)=0$ hence $p\notin A(\mu)$.
Therefore, $A(\mu)\subset \{p\in X:m_p\prec \mu\}$. 

Now suppose $m_p\prec \mu$ but $\mu(\{p\})=0$.
Then, $m_p(\{p\})=0$ which is impossible therefore $\{p\in X:m_p\prec \mu\}\subset A(\mu)$.
\end{proof}

Next we prove that the atoms of measures in $\mathcal{M}_{top}(f)$ are topologically stable points of $f$.

\begin{lemma}
\label{avo}
If $\mu\in \mathcal{M}_{top}(f)$, then $A(\mu)\subset T(f)$.
\end{lemma}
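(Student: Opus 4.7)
The plan is to chain together the three preceding lemmas (\ref{depp}, \ref{convidado}, and \ref{ross}) in a single short deduction, with essentially no new work needed.

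First, I take an arbitrary $p \in A(\mu)$. By Lemma \ref{ross}, membership in $A(\mu)$ is equivalent to $m_p \prec \mu$, so I may record that $m_p$ is absolutely continuous with respect to $\mu$.

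Next, since $\mu \in \mathcal{M}_{top}(f)$ by hypothesis, I invoke Lemma \ref{convidado} (the $\prec$-invariance of $\mathcal{M}_{top}(f)$, which is Item (2) of the theorem) with the role of $\nu$ played by $\mu$ and the dominated measure being $m_p$. This yields $m_p \in \mathcal{M}_{top}(f)$, i.e.\ $f$ is $m_p$-topologically stable.

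Finally, I apply Lemma \ref{depp} (Item (1) of the theorem), which characterizes the topologically stable points precisely as those $p$ for which $f$ is $m_p$-topologically stable. This gives $p \in T(f)$, completing the inclusion $A(\mu) \subset T(f)$. Since each step is a direct quotation of a previous lemma, there is no real obstacle here; the only thing to double-check is that the arrow in Lemma \ref{convidado} goes in the right direction, namely from the larger measure $\nu = \mu$ down to the absolutely continuous one $m_p$, which matches the statement verbatim.
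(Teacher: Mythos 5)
Your proof is correct and is exactly the paper's argument: take $p\in A(\mu)$, use Lemma \ref{ross} to get $m_p\prec\mu$, then Lemma \ref{convidado} to get $m_p\in\mathcal{M}_{top}(f)$, then Lemma \ref{depp} to conclude $p\in T(f)$. Nothing further is needed.
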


\begin{proof}
Take $p\in A(\mu)$. Then, $m_p\prec \mu$ by Lemma \ref{ross} so
$m_p\in\mathcal{M}_{top}(f)$ by Lemma \ref{convidado} thus
$p\in T(f)$ by Lemma \ref{depp}. 
\end{proof}

Now we prove Item (3).

\begin{lemma}
\label{louis}
Let $f:M\to M$ be a homeomorphism of a closed manifold of dimension $\geq2$, If
$\mu\in\mathcal{M}_{fs}(X)$ and $f$ is $\mu$-topologically stable, then $f$ has the $\mu$-shadowing property.
\end{lemma}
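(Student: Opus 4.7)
Since $\mu\in\mathcal{M}_{fs}(X)$, write $\mu=\sum_{i=1}^k t_i m_{p_i}$ with distinct atoms $p_1,\dots,p_k$. By Lemma \ref{avo}, each $p_i$ lies in $T(f)$, i.e.\ is a topologically stable point of $f$. Given $\epsilon>0$, the plan is to extract from the topological stability at each $p_i$ a tolerance $\delta_i>0$ (corresponding to $\epsilon/2$), and to take $B=\{p_1,\dots,p_k\}$, which is Borel with $\mu(B)=1$. The shadowing $\delta>0$ will be chosen small enough that every $\delta$-pseudo-orbit starting in $B$ can be realized, after a small perturbation, as a genuine orbit of a continuous map $g$ whose $C^0$-distance to $f$ is at most $\min_i \delta_i$; at that point, topological stability of the relevant atom delivers the shadowing point.

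Concretely, take a $\delta$-pseudo-orbit $(x_n)_{n\geq 0}$ of $f$ with $x_0=p_i\in B$. First I would perturb the tail $(x_n)_{n\geq 1}$ to a sequence $(\tilde x_n)_{n\geq 0}$ with $\tilde x_0=p_i$, pairwise distinct terms, $d(\tilde x_n,x_n)\leq\epsilon/2$, and still a $(\min_i\delta_i)$-pseudo-orbit; the freedom to arrange distinctness uses $\dim M\geq 2$. Next I would build a continuous $g:M\to M$ with $d_{C^0}(f,g)\leq\min_i\delta_i$ and $g(\tilde x_n)=\tilde x_{n+1}$ for every $n\geq 0$, by modifying $f$ inside pairwise disjoint small closed balls around the points $\tilde x_n$ via a cutoff function. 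Then $g^n(p_i)=\tilde x_n$, so applying the topological stability of $p_i$ to this $g$ yields a continuous $h:\overline{O_g(p_i)}\to M$ with $d_{C^0}(h,i_{\overline{O_g(p_i)}})\leq\epsilon/2$ and $f\circ h=h\circ g$. Setting $y=h(p_i)$ gives $f^n(y)=h(g^n(p_i))=h(\tilde x_n)$, and the triangle inequality
$$d(f^n(y),x_n)\leq d(h(\tilde x_n),\tilde x_n)+d(\tilde x_n,x_n)\leq \epsilon/2+\epsilon/2=\epsilon$$
for every $n\geq 0$ is precisely the $\mu$-shadowing property.

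The main obstacle is the construction of $g$: realizing a pseudo-orbit with distinct terms as the actual orbit of a continuous map lying within a prescribed $C^0$-distance of $f$. This is where the hypothesis $\dim M\geq 2$ is essential, since in dimension one the cyclic or linear order structure of $S^1$ or an interval obstructs such realizations for generic homeomorphisms, whereas in higher dimension the complement of any finite set is path-connected and there is enough room to isotope $f(\tilde x_n)$ to $\tilde x_{n+1}$ inside a ball of radius comparable to the pseudo-orbit constant. I would carry this out by choosing pairwise disjoint closed balls $B[\tilde x_n,r_n]$ small enough to avoid one another and to sit in coordinate charts, on each of which one modifies $f$ by a radial cutoff pushing $f(\tilde x_n)$ to $\tilde x_{n+1}$; the $\delta$-pseudo-orbit hypothesis keeps each local displacement at most $\delta$, which controls $d_{C^0}(f,g)$ globally, and the disjointness guarantees the local modifications patch together into a well-defined continuous map on $M$.
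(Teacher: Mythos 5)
Your first step coincides with the paper's: Lemma \ref{avo} gives $A(\mu)=\{p_1,\dots,p_k\}\subset T(f)$, and for a finitely supported measure the passage from ``every atom is a shadowable point'' to the $\mu$-shadowing property is immediate (take $B=\{p_1,\dots,p_k\}$ and the minimum of the associated $\delta_i$'s). Where you diverge is that the paper simply cites the pointwise result that on a closed manifold of dimension $\geq 2$ every topologically stable point is a shadowable point (the lemma from \cite{klm} invoked in its proof, itself an adaptation of Walters's argument in \cite{w}) together with Lemma 2.5 of \cite{s}, whereas you set out to reprove that implication directly. That is a legitimate route, but your execution has a real gap.

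The gap is in the construction of $g$. A $\delta$-pseudo-orbit is an \emph{infinite} sequence, so in a compact manifold the points $\tilde x_n$ must accumulate; along any convergent subsequence $\tilde x_{n_k}\to z$, pairwise disjoint balls $B[\tilde x_{n_k},r_{n_k}]$ force $r_{n_k}\to 0$. Your patched map satisfies $g(\tilde x_{n_k})=\tilde x_{n_k+1}$, and continuity of $g$ at $z$ would require $\tilde x_{n_k+1}\to f(z)$; but the pseudo-orbit condition only places $\tilde x_{n_k+1}$ within $\delta'$ of $f(\tilde x_{n_k})$, so the displacement on the shrinking balls need not tend to zero and $g$ fails to be continuous at accumulation points. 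The missing idea is the standard reduction to finite pseudo-orbits: run your construction on initial segments $x_0,\dots,x_N$ only (finitely many disjoint balls, so the patched $g_N$ is genuinely continuous and $d_{C^0}(f,g_N)\leq\min_i\delta_i$), obtain for each $N$ a point $y_N$ with $d(f^n(y_N),x_n)\leq\epsilon$ for $0\leq n\leq N$, and then pass to a convergent subsequence $y_{N_j}\to y$; since each condition $d(f^n(\cdot),x_n)\leq\epsilon$ is closed and eventually satisfied, $y$ shadows the whole pseudo-orbit. With that reduction inserted (and the routine care needed to keep the perturbed finite segment a pseudo-orbit with pairwise distinct terms, which $\dim M\geq 2$ does permit), your argument goes through and recovers the cited lemma of \cite{klm}.
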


\begin{proof}
We will use the following definition from \cite{m}.
We say that $p\in M$ is {\em shadowable} if
for every $\epsilon>0$ there is $\delta>0$ such that
for every sequence $(x_i)_{i\geq0}$ with $x_0=p$ and $d(f(x_i),x_{i+1})\leq\delta$ for every $i\geq0$ there is $x\in M$ such that $d(f^i(x),x_i)\leq\epsilon$ for every $i\geq0$.
Denote by $Sh(f)$ the set of shadowable points of $f$.
The support $supp(\mu)$ of $\mu\in \mathcal{M}(M)$ is the set
of points $x$ such that $\mu(B[x,\delta])>0$ for every $\delta>0$.

Since $\mu$ is finitely supported, $supp(\mu)=A(\mu)$.
Then, $supp(\mu)\subset T(f)$ by Lemma \ref{avo}.
Since $M$ is a closed manifold of dimension $\geq2$,
$supp(\mu)\subset Sh(f)$ by Lemma 3.11 
so $f$ has the $\mu$-shadowing property by Lemma 2.5 in \cite{s}.
\end{proof}

Next we prove Item (4) i.e. the invariance of the $\mu$-topological stability under topological conjugacy.

\begin{lemma}
\label{invariance}
Let $X$ be a compact metric space and $\mu\in \mathcal{M}(X)$.
If $f\in C^0(X)$ is $\mu$-topologically stable, then $H\circ f\circ H^{-1}$ is $H_*(\mu)$-topologically stable for every homeomorphism of compact metric spaces $H:X\to Z$.
\end{lemma}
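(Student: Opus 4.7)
The plan is to transport the $\mu$-topological stability of $f$ through the conjugacy $H$. Write $\tilde f=H\circ f\circ H^{-1}$ and $\tilde\mu=H_*(\mu)$. Given $\tilde g\in C^0(Z)$ close to $\tilde f$, the natural strategy is to push it back to $g=H^{-1}\circ\tilde g\circ H\in C^0(X)$, apply the stability hypothesis on $f$ to produce $Y\subset X$ and $h:Y\to X$, and then push these forward to $\tilde Y=H(Y)\subset Z$ and $\tilde h=H\circ h\circ H^{-1}|_{\tilde Y}:\tilde Y\to Z$. The verification then consists of translating four conditions through $H$: compactness and $\tilde g$-invariance of $\tilde Y$, the measure bound on $Z\setminus\tilde Y$, the $C^0$-closeness of $\tilde h$ to the inclusion, and the semiconjugacy $\tilde f\circ\tilde h=\tilde h\circ\tilde g$.

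The main technical point is that $H$ is not an isometry, so $C^0$-distances do not transfer directly. The fix is standard: since $X$ and $Z$ are compact, $H$ and $H^{-1}$ are uniformly continuous. Given $\epsilon>0$, I first choose $\epsilon'\in(0,\epsilon)$ small enough that any two points of $X$ at distance $\leq\epsilon'$ have $H$-images at distance $\leq\epsilon$ in $Z$. Then I apply the $\mu$-topological stability of $f$ to obtain $\delta'>0$ corresponding to $\epsilon'$. Finally I pick $\delta>0$ so that any two maps into $Z$ at $C^0$-distance $\leq\delta$ yield, after conjugation by $H^{-1}$, maps into $X$ at $C^0$-distance $\leq\delta'$; again this is possible because $H^{-1}$ is uniformly continuous. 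For any $\tilde g$ with $d_{C^0}(\tilde f,\tilde g)\leq\delta$, the map $g=H^{-1}\tilde g H$ then satisfies $d_{C^0}(f,g)\leq\delta'$, which is where the hypothesis on $f$ is applied.

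Having obtained $Y$ and $h$, the checks are essentially formal. The set $\tilde Y=H(Y)$ is compact because $H$ is continuous and $Y$ is compact; it is $\tilde g$-invariant because $\tilde g(\tilde Y)=H(g(Y))\subset H(Y)=\tilde Y$. The measure condition follows from $\tilde\mu(Z\setminus\tilde Y)=\mu(H^{-1}(Z\setminus\tilde Y))=\mu(X\setminus Y)\leq\epsilon'\leq\epsilon$. The map $\tilde h=H\circ h\circ H^{-1}|_{\tilde Y}$ is continuous as a composition of continuous maps, and for $z=H(y)\in\tilde Y$ we have $d(\tilde h(z),z)=d(H(h(y)),H(y))\leq\epsilon$ by the choice of $\epsilon'$ since $d(h(y),y)\leq\epsilon'$. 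The semiconjugacy is a one-line computation: $\tilde f\circ\tilde h=H f H^{-1}\cdot H h H^{-1}=H(f\circ h)H^{-1}=H(h\circ g)H^{-1}=H h H^{-1}\cdot H g H^{-1}=\tilde h\circ\tilde g$ on $\tilde Y$.

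The only genuine obstacle is the bookkeeping between the two moduli of continuity (for $H$ and $H^{-1}$), and this is the sole place compactness of $X$ and $Z$ is used in an essential way. The argument yields one inclusion $H_*(\mathcal{M}_{top}(f))\subset\mathcal{M}_{top}(H\circ f\circ H^{-1})$; the reverse inclusion, which completes Item~(4) of the theorem, follows by applying the same argument to the homeomorphism $H^{-1}:Z\to X$ and the map $\tilde f$, using $(H^{-1})_*\tilde\mu=\mu$.
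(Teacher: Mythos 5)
Your proposal is correct and follows essentially the same route as the paper: pull $\tilde g$ back through $H$, apply the $\mu$-topological stability of $f$, push $Y$ and $h$ forward, and control the two $C^0$-distances via the uniform continuity of $H$ and $H^{-1}$ coming from compactness. Your closing remark that the reverse inclusion needed for the set equality in Item~(4) follows by applying the lemma to $H^{-1}$ is a correct and worthwhile observation that the paper leaves implicit.
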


\begin{proof}
Fix $\epsilon>0$. Since $X$ and $Z$ are compact, $H$ is uniformly continous. Then, there is $\epsilon'>0$ such that
\begin{equation}
\label{enana}
a,b\in X\quad\mbox{ and }\quad d(a,b)\leq \epsilon'\quad\Longrightarrow\quad d(H(a),H(b))\leq\epsilon.
\end{equation}
For this $\epsilon'$ we choose $\delta'$ from the $\mu$-topological stability of $f$.
Again $H^{-1}$ is uniformly continuous so we can choose $\delta>0$ such that
\begin{equation}
\label{blanca}
c,d\in Y\quad\mbox{ and }\quad d(c,d)\leq \delta\quad\Longrightarrow\quad d(H^{-1}(c),H^{-1}(d))\leq\delta'.
\end{equation}
Now choose $g'\in C^0(Z)$ such that
$$
d_{C^0}(H\circ f\circ H^{-1},g')\leq\delta.
$$
Then, \eqref{blanca} implies
$$
d_{C^0}(f\circ H^{-1},H^{-1}\circ g')\leq\delta'\quad \mbox{ so }\quad d_{C^0}(f,H^{-1}\circ g'\circ H)\leq \delta'.
$$
It follows that there are a compact $(H^{-1}\circ g\circ H)$-invariant set $Y'\subset X$ and a continuous $h':Y'\to X$ such that
$$
d_{C^0}(h',i_{Y'})\leq \epsilon'\quad\mbox{ and }\quad f\circ h'=h'\circ H^{-1}\circ g\circ H.
$$
Then,
$H^{-1}\circ g'\circ H(Y')\subset Y'$ so $g'(H(Y'))\subset H(Y')$
thus $Y=H(Y')$ is $g'$-invariant
(and also compact for $H$ is a homeomorphism).

In addition,
$d(h'(y'),y')\leq \epsilon'$ for $y'\in Y'$ so
$$
d(h'(H^{-1}(z)),H^{-1}(z))\leq\epsilon'\quad\quad\forall z\in Y
$$
thus 
$$
d(H\circ h'\circ H^{-1}(z),z)\leq\epsilon\quad\quad\forall z\in Y
$$
hence \eqref{enana} implies
$$
d_{C^0}(h,i_Y)\leq \epsilon
$$
where
$$
h=H\circ h'\circ H^{-1}:Y\to Z.
$$
We also have
\begin{eqnarray*}
(H\circ f\circ H^{-1})\circ h & = & H\circ (f\circ h')\circ H^{-1}\\
& = & H\circ (h'\circ H^{-1}\circ g\circ H)\circ H^{-1}\\
& = & (H\circ h'\circ H^{-1})\circ g\\
& = & h\circ g
\end{eqnarray*}
namely
$$
(H\circ f\circ H^{-1})\circ h=h\circ g.
$$
Finally,
$$
H_*(\mu)(Z\setminus Y)=\mu(H^{-1}(Z\setminus Y))=\mu(X\setminus Y')\leq \epsilon
$$
proving the result.
\end{proof}

We now prove Item (5).

\begin{lemma}
\label{mama}
Let $f:X\to X$ be an expansive map of a compact metric space
and $\mu,\nu\in\mathcal{M}(X)$.
If $f$ is $\mu$-topologically and $\nu$-topologically stable, then $f$ is $(t\mu+(1-t)\nu)$-topologically stable for every $0\leq t\leq 1$.
\end{lemma}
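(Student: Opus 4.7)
The plan is to exploit expansiveness to glue the two conjugacies coming from $\mu$- and $\nu$-topological stability on their overlap. Let $e>0$ be an expansivity constant for $f$, and given $\epsilon>0$ set $\epsilon'=\min\{\epsilon,e/2\}$. Apply the $\mu$- and the $\nu$-topological stability with error $\epsilon'$ to obtain constants $\delta_\mu$ and $\delta_\nu$, and set $\delta=\min\{\delta_\mu,\delta_\nu\}$. For any $g\in C^0(X)$ with $d_{C^0}(f,g)\le\delta$ this produces compact $g$-invariant sets $Y_\mu,Y_\nu\subset X$ with $\mu(X\setminus Y_\mu)\le\epsilon'$, $\nu(X\setminus Y_\nu)\le\epsilon'$, and continuous maps $h_\mu:Y_\mu\to X$, $h_\nu:Y_\nu\to X$ satisfying $d_{C^0}(h_\mu,i_{Y_\mu})\le\epsilon'$, $d_{C^0}(h_\nu,i_{Y_\nu})\le\epsilon'$, and $f\circ h_\mu=h_\mu\circ g$, $f\circ h_\nu=h_\nu\circ g$.

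The candidate for the convex combination is $Y:=Y_\mu\cup Y_\nu$, which is again compact and $g$-invariant. The key step, where expansiveness enters, is to verify that $h_\mu$ and $h_\nu$ agree on $Y_\mu\cap Y_\nu$. For $y$ in this intersection and every $n\ge 0$, the commutation identities give $f^n(h_\mu(y))=h_\mu(g^n(y))$ and $f^n(h_\nu(y))=h_\nu(g^n(y))$; since $g^n(y)$ lies in both $Y_\mu$ and $Y_\nu$, the $C^0$ estimates yield
\[
d(f^n(h_\mu(y)),f^n(h_\nu(y)))\le d(h_\mu(g^n(y)),g^n(y))+d(g^n(y),h_\nu(g^n(y)))\le 2\epsilon'\le e,
\]
so expansiveness forces $h_\mu(y)=h_\nu(y)$. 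This is the main (and really the only nontrivial) obstacle.

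Having this agreement, the pasting lemma for continuous maps defined on closed subsets allows us to define a continuous $h:Y\to X$ by $h|_{Y_\mu}=h_\mu$ and $h|_{Y_\nu}=h_\nu$. The commutation $f\circ h=h\circ g$ and the inequality $d_{C^0}(h,i_Y)\le\epsilon'\le\epsilon$ are immediate from the analogous properties of $h_\mu$ and $h_\nu$. Finally, since $X\setminus Y=(X\setminus Y_\mu)\cap(X\setminus Y_\nu)$, for any $t\in[0,1]$ one has
\[
(t\mu+(1-t)\nu)(X\setminus Y)\le t\,\mu(X\setminus Y_\mu)+(1-t)\,\nu(X\setminus Y_\nu)\le\epsilon'\le\epsilon,
\]
which completes the verification of $(t\mu+(1-t)\nu)$-topological stability.
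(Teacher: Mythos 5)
Your proposal is correct and follows essentially the same route as the paper's proof: shrink $\epsilon$ to $\epsilon'=\min\{\epsilon,e/2\}$, take $Y=Y_\mu\cup Y_\nu$, use expansiveness via the estimate $d(f^n(h_\mu(y)),f^n(h_\nu(y)))\le 2\epsilon'\le e$ to show $h_\mu=h_\nu$ on the overlap, and glue. No gaps.
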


\begin{proof}
Fix $\epsilon>0$ and let $e$ be an expansivity constant of $f$.
Let $\delta_\mu$ and $\delta_\nu$ be given by the $\mu$-topological and $\nu$-topological stabilities of $f$ respectively
for
$$
\epsilon'=\min\{\frac{e}2,\epsilon\}.
$$

Take $\delta=\min\{\delta_\mu,\delta_\nu\}$ and let $g\in C^0(X)$ be such that
$d_{C^0}(f,g)\leq\delta$.
Then, $d_{C^0}(f,g)\leq\delta_*$ for $*=\mu,\nu$ and so
there are compact $g$-invariant sets $Y_\mu,Y_\nu\subset X$ and continuous maps $h_\mu:Y_\mu\to X$ and $h_\nu:Y_\nu\to X$ such that
$$
d_{C^0}(h_*,i_{Y_*})\leq\epsilon'\quad\mbox{ and }\quad
f\circ h_*=h_*\circ g\quad\mbox{ for }\quad *=\mu,\nu.
$$
Define $Y=Y_\mu\cup Y_\nu$. Hence $Y$ is compact $g$-invariant and also
\begin{eqnarray*}
(t\mu+(1-t)\nu)(X\setminus Y) & = & t\mu(X\setminus Y)+(1-t)\nu(X\setminus Y)\\
& \leq & t\mu(X\setminus Y_\mu)+(1-t)\nu(X\setminus \nu)\\
& \leq & t\epsilon'+(1-t)\epsilon'\\
& = & \epsilon'\\
& \leq & \epsilon.
\end{eqnarray*}

We claim that $h_\mu=h_\nu$ on $Y_\mu\cap Y_\nu$.
Indeed, if $x\in Y_\mu\cap Y_\nu$,
then
\begin{eqnarray*}
d(f^n(h_\nu(x)),f^n(h_\nu(x))) & = & d(h_\nu(g^n(x)),h_\nu(g^n(x)))\\
& \leq & d(h_\mu(g^n(x)),g^n(x))+d(h_\nu(g^n),g^n(x))\\
& \leq & d_{C^0}(h,i_{Y_\mu})+d_{C^0}(h_\nu,i_{Y_\nu})\\
& \leq & 2\epsilon'\\
& \leq & e,\quad\quad\forall n\geq0.
\end{eqnarray*}
Since $e$ is an expansivity constant, we get $h_\mu(x)=h_\nu(x)$ proving the claim.
It follows that $h:Y\to X$ defined by
$$
h(x) = \left\{
\begin{array}{rll}
h_\mu(x),   & \hbox{if} & x \in Y_\mu \\
h_\nu(x),  & \hbox{if} & x \in Y_\nu
\end{array}
\right.
$$
is well-defined and continuous.
Since
$f\circ h_*=h_*\circ g$ for $*=\mu,\nu$,
we get $f\circ h=h\circ g$ proving the result.
\end{proof}

The lemma below is Item (6).

\begin{lemma}
\label{pai}
Let $f:X\to X$ be a continuous map of a compact metric space.
If $f$ is $\mu$-topologically stable for some nonatomic $\mu\in\mathcal{M}(X)$, then $f$ is a $\mu$-topologically set-valued stable measure.
\end{lemma}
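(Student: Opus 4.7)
The plan is to promote the single-valued continuous map furnished by $\mu$-topological stability to a set-valued one by taking singletons on $Y$ and the empty set off $Y$. Given $\epsilon>0$, I would apply the $\mu$-topological stability of $f$ to extract $\delta>0$; for every $g\in C^0(X)$ with $d_{C^0}(f,g)\leq\delta$ this produces a compact $g$-invariant set $Y\subset X$ with $\mu(X\setminus Y)\leq\epsilon$ together with a continuous $h:Y\to X$ obeying $d_{C^0}(h,i_Y)\leq\epsilon$ and $f\circ h=h\circ g$. I then set
\[
H(x)=\{h(x)\} \text{ for } x\in Y,\qquad H(x)=\emptyset \text{ for } x\notin Y,
\]
so that $Dom(H)=Y$ and $H$ is compact-valued.

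Next I would verify the remaining requirements in turn. The measure bound $\mu(Dom(H))\geq 1-\epsilon$ is just $\mu(Y)\geq 1-\epsilon$. The inequality $d_{C^0}(H,i_X)\leq\epsilon$ amounts to $H(x)\subset B[x,\epsilon]$, which holds trivially off $Y$ and reduces to $d(h(x),x)\leq\epsilon$ on $Y$. Upper semicontinuity follows from continuity of $h$ on the closed set $Y$ together with the fact that $H$ vanishes on the open complement. The conjugacy $f\circ H=H\circ g$ holds on $Y$ because $g(Y)\subset Y$ combined with $f\circ h=h\circ g$; on $X\setminus Y$ the equation is less immediate, and if a strict pointwise identity on all of $X$ is required, one first enlarges $Y$ to its $g$-backward-saturation $\tilde Y=\bigcup_{n\geq 0}g^{-n}(Y)$ (which is both $g$-forward-invariant and $g$-backward-closed and still satisfies $\mu(\tilde Y)\geq 1-\epsilon$) and extends $h$ to $\tilde Y$ by selecting preimages along $g$-orbits.

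The crucial step, and the sole place where nonatomicity of $\mu$ is actually used, is the verification of $\mu\circ H=0$: every nonempty value $H(x)=\{h(x)\}$ is a singleton, whose $\mu$-mass is zero precisely because $\mu$ has no atoms. This is the ingredient whose absence in the atomic case is witnessed by Example~\ref{basicas}, and in the present setting it completes the list of conditions for $\mu$-topological set-valued stability. The main obstacle I anticipate is the careful bookkeeping around the equivariance $f\circ H=H\circ g$ on points of $X\setminus Y$ with $g$-image in $Y$; everything else is a direct transcription of the hypotheses furnished by $\mu$-topological stability.
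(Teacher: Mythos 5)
Your construction is exactly the paper's: it defines the same $H$ (the singleton $\{h(x)\}$ on $Y$, the empty set off $Y$, modulo an evident typo in the paper swapping the two cases), checks the same four conditions, and uses nonatomicity only to get $\mu\circ H=0$ from the fact that the nonempty values are singletons. The equivariance subtlety you flag at points of $X\setminus Y$ whose $g$-image lands in $Y$ is not addressed in the paper either --- it passes directly from $f\circ h=h\circ g$ to $f\circ H=H\circ g$ --- so your backward-saturation detour is not part of the intended argument, and as stated it would be problematic anyway (extending $h$ by ``selecting preimages along $g$-orbits'' need not be possible if $f$ is not surjective, nor continuous if it is).
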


\begin{proof}
Let $\epsilon>0$ and $\delta$ be given by the $\mu$-topological stability.
Take $g\in C^0(X)$ with $d_{C^0}(f,g)\leq\delta$.
Let $Y\subset X$ closed $g$-invariant with $\mu(X\setminus Y)\leq\epsilon$ and $h:Y\to X$ be given from the choice of $g$.
Define $H:X\to 2^X$ by
$$
H(x) = \left\{
\begin{array}{rll}
\emptyset,   & \hbox{if} & x \in Y \\
\{h(x)\},  & \hbox{if} & x \notin Y.
\end{array}
\right.
$$
Then, $Dom(H)=Y$ so
$$
\mu(X\setminus Dom(H))=\mu(X\setminus Y)\leq\epsilon.
$$
Since $d_{C^0}(h,i_Y)\leq\epsilon$, we also have
$d_{C^0}(H,i_X)\leq\epsilon$.
Since $\mu$ is nonatomic, $\mu\circ H=0$.
From $f\circ h=h\circ g$ we get $f\circ H=H\circ g$ completing the proof.
\end{proof}

The above lemma is false for general measures (by Example \ref{basicas}).
Finally we prove Item (7).

\begin{lemma}
\label{wally}
Let $X$ be a compact metric space and $\mu\in \mathcal{M}(X)$. Then, every expansive map with the weak $\mu$-shadowing property of $X$ is
$\mu$-topologically stable.
\end{lemma}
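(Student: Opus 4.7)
\medskip

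The plan is to mimic Walters's classical proof of his stability theorem, where expansivity plus shadowing yields topological stability, but with the set $B$ of the weak $\mu$-shadowing property taking over the role of ``full space'' in the construction.

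\medskip

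Fix $\epsilon>0$ and an expansivity constant $e>0$ of $f$. First I would choose an auxiliary $\epsilon'>0$ with $\epsilon'<\min\{\epsilon,e/2\}$, and apply the weak $\mu$-shadowing property at scale $\epsilon'$ to obtain $\delta>0$ and a Borel set $B\subset X$ with $\mu(X\setminus B)\leq\epsilon'$ such that every $\delta$-pseudo-orbit of $f$ starting in $B$ is $\epsilon'$-shadowed. Now take any $g\in C^0(X)$ with $d_{C^0}(f,g)\leq\delta$; then for every $x\in X$ the $g$-orbit $(g^n(x))_{n\geq0}$ is a $\delta$-pseudo-orbit of $f$.

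\medskip

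The key move is to define the candidate invariant set intrinsically, rather than as $B$ itself (which need not be $g$-invariant). Put
$$
Y=\{x\in X:\exists\,y\in X\text{ with }d(f^n(y),g^n(x))\leq\epsilon'\text{ for every }n\geq0\}.
$$
Then $B\subset Y$ by weak $\mu$-shadowing, so $\mu(X\setminus Y)\leq\epsilon'\leq\epsilon$. The set $Y$ is closed (and hence compact): if $x_k\to x$ with witnesses $y_k\to y$ along a subsequence extracted by compactness of $X$, continuity of $f^n,g^n$ gives $d(f^n(y),g^n(x))\leq\epsilon'$ for all $n\geq0$. Moreover $Y$ is $g$-invariant: if $y$ witnesses $x\in Y$, then $f(y)$ witnesses $g(x)\in Y$ since $d(f^n(f(y)),g^n(g(x)))=d(f^{n+1}(y),g^{n+1}(x))\leq\epsilon'$.

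\medskip

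Next I would define $h:Y\to X$ by letting $h(x)$ be the witness $y$, which is unique by expansivity: two witnesses $y_1,y_2$ satisfy $d(f^n(y_1),f^n(y_2))\leq 2\epsilon'<e$ for all $n\geq0$, hence $y_1=y_2$. The taking-$n=0$ estimate gives $d_{C^0}(h,i_Y)\leq\epsilon'\leq\epsilon$, and uniqueness combined with the invariance observation from the previous paragraph yields $f\circ h=h\circ g$. Continuity of $h$ follows by the same compactness-plus-expansivity argument that established closedness of $Y$: any subsequential limit of $h(x_k)$ for $x_k\to x$ must itself witness $x$, hence equal $h(x)$, so the whole sequence converges to $h(x)$.

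\medskip

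The main obstacle I anticipate is exactly the one solved in the second paragraph: since $B$ itself is generally neither $g$-invariant nor closed, a direct definition of $h$ on $B$ does not produce a compact $g$-invariant domain as required by the definition of $\mu$-topological stability. The fix is to replace $B$ by the intrinsically defined set $Y$ of points whose $g$-orbits admit an $\epsilon'$-tracing by an $f$-orbit; weak $\mu$-shadowing is used only once, to guarantee $B\subset Y$ (and hence the measure bound), while expansivity then takes over to force closedness, $g$-invariance, and the structural properties of $h$.
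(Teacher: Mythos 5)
Your proof is correct, and it follows the same core strategy as the paper (shadow each $g$-orbit by an $f$-orbit, use expansivity to make the witness unique, and let $h$ send each point to its witness), but the technical execution differs in a way worth noting. The paper takes $Y=\overline{O_g(B)}$, defines $h$ only on the orbit $O_g(B)$ by $h(g^{n_x}(b))=f^{n_x}(y)$, must then check well-definedness (a point may lie on several orbits), proves \emph{uniform} continuity of $h$ on $O_g(B)$ via the uniform-expansivity lemma (Lemma 2 of Walters), and finally extends $h$ to the closure. You instead take $Y$ to be the set of \emph{all} points whose $g$-orbit is $\epsilon'$-traced by some $f$-orbit; this set contains $B$, and is closed and $g$-invariant essentially by inspection, so no closure or extension step is needed. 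Continuity of $h$ then comes from a soft compactness-plus-uniqueness-of-limits argument rather than from uniform expansivity, and well-definedness is immediate since $h(x)$ is characterized intrinsically as the unique tracer of $x$ rather than through a choice of representation $x=g^{n_x}(b)$. The one point to keep explicit is the uniqueness estimate $2\epsilon'<e$, which your choice $\epsilon'<e/2$ secures; with that, every step you outline (measure bound, closedness, invariance, $d_{C^0}(h,i_Y)\leq\epsilon$ via $n=0$, and $f\circ h=h\circ g$ via uniqueness of witnesses) goes through. Your version buys a shorter argument that avoids both the well-definedness check and the appeal to Walters's Lemma 2; the paper's version produces a possibly smaller domain $\overline{O_g(B)}$, which makes no difference for the statement being proved.
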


\begin{proof}
We will use the methods in \cite{klm}.

Let $f:X\to X$ be expansive with the weak $\mu$-shadowing property.
Fix $\mu\in\mathcal{M}(X)$.
Let $e>0$ be an expansivity constant.
Fix $\epsilon>0$ and $\epsilon'=\frac{1}8\min\{e,\epsilon\}$.
Let $\delta>0$ and $B\in \beta_X$ be given by the weak $\mu$-shadowing property for $\epsilon'$. 

Now, let $g:X\to X$ be continuous such that $d_{C^0}(f,g)\leq \delta$.
Take $Y=\overline{O_g(B)}$ the closure of the $g$-orbit of $B$ defined by
$$
O_g(B)=\{g^n(b):n\geq0,\,b\in B\}.
$$
Clearly $Y$ is closed $g$-invariant. Moreover,  $B\subset Y$ so
\begin{equation}
\label{chikatilo}
\mu(X\setminus Y)\leq \mu(X\setminus B)<\epsilon'<\epsilon.
\end{equation}

Take $x\in O_g(B)$ hence $x=g^{n_x}(b)$ for some $b\in B$ and some nonnegative integer $n_x$.
Since $d_{C^0}(f,g)\leq\delta$, the sequence $(b_n)_{n\geq0}$ defined by
$b_n=g^n(b)$ satisfy
$b_0=b\in B$ and
$$
d(f(b_n),b_{n+1})
=  d(f(g^n(b)),g(g^n(b)) \leq d_{C^0}(f,g)\leq\delta,\quad\quad\forall
n\geq0.
$$
Then, there is $y\in X$ such that
\begin{equation}
\label{frank}
d(f^n(y),g^n(b))\leq\epsilon',\quad\quad\forall n\geq0.
\end{equation}
Defining
$$
h(x)=f^{n_x}(y)
$$
we have a map $h:O_g(B)\to X$.

We shall prove that $h$ is well-defined.
Suppose $x=g^{m_x}(b')$ for some others nonnegative integer $m_x$ and $b'\in B$.
As before we get $y'\in X$ such that
$
d(f^n(y'),g^n(b'))\leq\epsilon'
$, $\forall n\geq0.$
Then,
\begin{eqnarray*}
d(f^r(f^{n_x}(y)),f^r(f^{m_x}(y'))) & \leq &
d(f^{r+n_x}(y),g^{r+n_x}(b))+\\
& & d(g^{r+m_x}(b'),f^{r+m_x}(y'))\\
& < & 2\epsilon'\\
& < & e,\quad\quad\forall r\geq0
\end{eqnarray*}
so $f^{n_x}(y)=f^{m_x}(y')$ by expansivity thus $h$ is well-defined.

Next we note that
if $x=g^{n_x}(b)$ as before, then $g(x)=g^{n_x+1}(b)$ and so
$h(g(x))=f^{n_x+1}(y')$ with $y'$ satisfying
$
d(f^n(y'),g^n(b))\leq \epsilon',
$ $\forall n\geq0.$
Combined with \eqref{frank} we get
$$
d(f^n(y),f^n(y'))\leq 2\epsilon'<e,\quad\quad\forall n\geq0
$$ so $y=y'$ by expansivity.
It follows that
$$
(f\circ h)(x)=f(f^{n_x}(y))=f^{n_x+1}(y)=h(g(x))=(h\circ g)(x),\quad\forall x\in O_g(B)
$$
proving
\begin{equation}
\label{wood}
f\circ h=h\circ g\quad\mbox{ on }\quad O_g(B).
\end{equation}
Also, by replacing $n$ by $n_x$ in \eqref{frank} we have
\begin{equation}
\label{under}
d(h(x),x)\leq \epsilon',\quad\quad\forall x\in O_g(B).
\end{equation}

We assert that $h:O_g(B)\to X$ is uniformly continuous.
Fix $\Delta>0$. Since $e$ is an expansivity constant we can apply Lemma 2 in \cite{w}
to find $N\in\mathbb{N}$ such that
$d(a,b)<\Delta$ whenever $a,b\in X$ satisfy
$d(f^n(a),f^n(b))\leq e$ for $0\leq n\leq N$.
On the other hand, $g$ is continuous so there is $\rho>0$ such that
$d(g^n(a),g^n(b))\leq \frac{e}2$ for $0\leq n\leq N$ whenever $a,b\in X$ and$d(a,b)<\rho$.
Now, take $x,y\in O_g(B)$ with $d(x,y)<\rho$.
It follows that
\begin{eqnarray*}
d(f^n(h(x)),f^n(h(y)))& = & d(h(g^n(x)),h(g^n(y)))\\
& \leq & d(h(g^n(x)),g^n(x))+d(g^n(x),g^n(y))+\\
& & d(h(g^n(y)),g^n(y))\\
& \overset{\eqref{under}}{\leq} & 2\epsilon'+\frac{e}2\\
& < & \frac{e}2+\frac{e}4\\
& < & e,\quad\quad\forall 0\leq n\leq N.
\end{eqnarray*}
Then, $d(h(x),h(y))<\Delta$ proving the assertion.

It follows that $h$ has a continuous extension $h:Y\to X$.
By taking limit in \eqref{wood} and \eqref{under} we obtain
$$
d_{C^0}(h,i_Y)\leq \epsilon'<\epsilon\quad\mbox{ and }\quad f\circ h=h\circ g.
$$
These expressions together with \eqref{chikatilo} imply the result.
\end{proof}

\begin{proof}[Proof of the theorem]
Items (1) and (2) were proved in lemmas \ref{depp} and \ref{convidado}
respectively. 
Items (3) to (7) in lemmas \ref{louis} to \ref{wally} respectively.
\end{proof}


\begin{thebibliography}{00}






\bibitem{ap}
Andronov A., Pontrjagin L.,
Syst\' emes Grossiers,
{\em Dokl. Akad. Nauk. SSSR.} 14 (1937), 247--251.



\bibitem{b}
Bowen, R.,
$\omega$-limit sets for axiom A diffeomorphisms,
{\em J. Differential Equations} 18 (1975), 333--339. 




\bibitem{h}
Hirsch, M.W.,
{\em Differential topology},
Graduate Texts in Mathematics, No. 33. Springer-Verlag, New York-Heidelberg, 1976.



\bibitem{kh}
Katok, A., Hasselblatt, B.,
{\em Introduction to the modern theory of dynamical systems.
With a supplementary chapter by Katok and Leonardo Mendoza},
Encyclopedia of Mathematics and its Applications, 54. Cambridge University Press, Cambridge, 1995.



\bibitem{klm}
Koo, N., Lee, K., Morales, C.A.,
Pointwise topological stability,
{\em Proc. Edinb. Math. Soc.} (2) 61 (2018), no. 4, 1179--1191. 


\bibitem{lm}
Lee, K., Morales, C.A.,
Topological stability and pseudo-orbit tracing property for expansive measures,
{\em J. Differential Equations} 262 (2017), no. 6, 3467--3487. 


\bibitem{lr}
Lee, K., Rojas, A.,
On almost shadowable measures,
{\em Russ. J. Nonlinear Dyn.} (to appear).


\bibitem{m}
Morales, C.A. ,
Shadowable points,
{\em Dyn. Syst.} 31 (2016), 347--356. 


\bibitem{n1}
Nitecki, Z.,
{\em Differentiable Dynamics. An introduction to the Orbit Structure of Diffeomorphisms.}
The M.I.T. Press, Cambridge, Mass.--London, 1971.


\bibitem{p}
Peixoto, M.M.,
Structural stability on two-dimensional manifolds,
{\em Topology} 1 (1962), 101--120. 



\bibitem{s}
 Shin, B.,
 On the set of shadowable measures,
 {\em J. Math. Anal. Appl.} 469 (2019), 872--881. 


\bibitem{w}
Walters, P.,
On the pseudo-orbit tracing property and its relationship to stability.
The structure of attractors in dynamical systems (Proc. Conf., North Dakota State Univ., Fargo, N.D., 1977), pp. 231--244,
{\em Lecture Notes in Math.}, 668, Springer, Berlin, 1978.


\bibitem{w2}
Walters, P.,
{\em An introduction to ergodic theory},
Graduate Texts in Mathematics, 79. Springer-Verlag, New York-Berlin, 1982.


\bibitem{w0}
Walters, P.,
Anosov diffeomorphisms are topologically stable,
{\em Topology} 9 (1970), 71--78.



\end{thebibliography}
\end{document}